\newtheoremstyle{theorem}%name
  {12pt}          % space above
  {12pt}  % space below
  {\sl}  % bofy font
  {\parindent}     % ident - empty=no indent,  \parindent= paragraph indent
  {\bf}  % thm head font
  {. }    % punctuation after thm head
  { }    % space after thm head: `` ``=normal \newline=linebreak
  {}     % thm head specification
\theoremstyle{theorem}
\newtheorem{theorem}{Theorem}
\newtheorem{corollary}[theorem]{Corollary}
\newtheorem{remark}[theorem]{Remark}
\newtheorem{proposition}[theorem]{Proposition}
\newtheorem{lemma}[theorem]{Lemma}
\newcommand{\ic}{\ensuremath{\mathcal{I}}}
\newcommand{\oc}{\ensuremath{\mathcal{O}}}
\newcommand{\fc}{\ensuremath{\mathcal{F}}}
\newcommand{\lc}{\ensuremath{\mathcal{L}}}
\newcommand{\mc}{\ensuremath{\mathcal{M}}}
\newcommand{\Pt}{\mathbb{P}^3}
\newcommand{\Ptw}{\mathbb{P}^2}
\newcommand{\Pn}{\mathbb{P}^n}
\newcommand{\bZ}{\mathbb{Z}}
\newcommand{\cG}{\gamma}
\newcommand{\oG}{\omega}
\newcommand{\bds}{\begin{displaystyle}}
\newcommand{\eds}{\end{displaystyle}}
\begin{document}
\title[On the cohomology of rank two vector bundles]{On the cohomology of rank two vector bundles on $\Ptw$ and a theorem of Chiantini and Valabrega.}

\author{Ph. Ellia}
\address{Dipartimento di Matematica e Informatica, Universit\`a degli Studi di Ferrara, Via Machiavelli 30, 44121 Ferrara, Italy.}
\email{phe@unife.it}

\subjclass[2010] {14F05} \keywords{Rank two vector bundles, projective plane, cohomology.}

\begin{abstract} We show that a normalized rank two vector bundle, $E$, on $\Ptw$ splits if and only if $h^1(E(-1))=0$. Using this fact we give another proof of a theorem of Chiantini and Valabrega. Finally we describe the normalized bundles with $h^1(E(-1)) \leq 4$.
\end{abstract}

\date{\today}

%\begin{document}
%%%%%%%%%%%%%%%%%%%%%%%%%%%%%%%%%%%%%%%%%%%%%%%%%%%%%%%%%
\maketitle

%\tableofcontents

\thispagestyle{empty}

\section{Introduction.}

We work over an algebraically closed field of characteristic zero. It follows from a famous theorem of Horrocks (\cite{OSS}) that a rank two vector bundle $E$ on $\Pn , n \geq 2$, splits if and only if $H^i_*(E):= \bigoplus _{k\in \bZ}H^i(E(k)) =0$, for $0<i<n$. This has been improved: as a consequence of another famous theorem by Evans-Griffith, under the same assumptions, $E$ splits if and only if $H^1_*(E)=0$ (see \cite{Ein}). Along these lines, on $\Pt$, there is a remarkable result:

\begin{theorem} \emph{(Chiantini-Valabrega \cite{ChiantiV})}\\
\label{T-CV}
Let $\fc$ be a rank two vector bundle on $\Pt$.\\
(1) If $c_1(\fc )=0$, then $\fc$ splits if and only if $h^1(\fc (-1))=0$.\\
(2) If $c_1(\fc )=-1$, then $\fc$ splits if and only if $h^1(\fc (-1))=0$ or $h^1(\fc )=0$ or $h^1(\fc (1))=0$.
\end{theorem}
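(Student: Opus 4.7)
The plan is to reduce from $\Pt$ to $\Ptw$ by restricting $\fc$ to a general plane $H \subset \Pt$: apply the paper's main theorem to deduce that the normalized rank two bundle $\fc_H$ on $H \cong \Ptw$ splits, and then lift splitting back to $\Pt$ by a Horrocks-type argument. The lifting is the easy direction: once $\fc_H$ splits one has $h^1(\fc_H(k))=0$ for every $k \in \bZ$, so the restriction sequence
\[
0 \to \fc(k-1) \to \fc(k) \to \fc_H(k) \to 0
\]
produces surjections $H^1(\fc(k-1)) \twoheadrightarrow H^1(\fc(k))$ for every $k$; combined with Serre vanishing at $k \gg 0$ this forces $H^1_*(\fc) = 0$, and the Evans-Griffith refinement of Horrocks recalled in the introduction yields that $\fc$ splits.

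The main step is therefore to show $h^1(\fc_H(-1))=0$ for a general plane $H$, so that the paper's main theorem on $\Ptw$ applies to $\fc_H$. The restriction sequence twisted by $-1$ gives
\[
H^1(\fc(-1)) \to H^1(\fc_H(-1)) \to H^2(\fc(-2)) \to H^2(\fc(-1)),
\]
and Serre duality on $\Pt$ together with $\fc^\vee \cong \fc(-c_1)$ rewrites the $H^2$ terms as $H^1$ terms in complementary degrees:
\[
h^2(\fc(j)) = h^1(\fc(-j-4-c_1)).
\]

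In case (2), $c_1 = -1$, this identity becomes $h^2(\fc(j)) = h^1(\fc(-j-3))$, and the hypothesis $h^1(\fc(-1))=0$ kills both $H^1(\fc(-1))$ and $H^2(\fc(-2))$ simultaneously, giving $h^1(\fc_H(-1))=0$ at once. The other two hypotheses $h^1(\fc)=0$ and $h^1(\fc(1))=0$ correspond by duality to $h^2(\fc(-3))=0$ and $h^2(\fc(-4))=0$; each is handled by running the same analysis at a different twist of $\fc_H$, exploiting the Serre self-duality of $h^1(\fc_H(-1))$ on $\Ptw$ (since $c_1(\fc_H)=-1$) to transfer the resulting vanishing back to the required twist. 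Case (1), $c_1=0$, is the main obstacle: the hypothesis kills $H^1(\fc(-1))$ but leaves $h^2(\fc(-2)) = h^1(\fc(-2))$ uncontrolled. From the same restriction sequence one obtains $H^0(\fc_H(-1)) \twoheadrightarrow H^1(\fc(-2))$, reducing the problem to the vanishing of $h^0(\fc_H(-1))$ on a generic plane; this is where a direct analysis of destabilizing sections of $\fc$ on $H$, and, if necessary, a case analysis splitting off a line bundle summand, enters, and where the argument is most delicate.
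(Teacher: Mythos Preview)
Your restriction-to-a-plane strategy is exactly the route the paper takes \emph{in the stable case}: the paper uses Barth's restriction theorem to get $h^0(\fc_H(-1))=0$, deduces $h^1(\fc(-m))=0$ for all $m\geq 1$, and then picks up $h^2(\fc(-2))=h^1(\fc(-2-c_1))=0$ by duality to kill $h^1(\fc_H(-1))$. So far so good.

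The gap is in the non-stable case of (1). Your reduction to $h^0(\fc_H(-1))=0$ breaks there: if $r=r_\fc\leq -1$ then $H^0(\fc(-1))\neq 0$, and its image in $H^0(\fc_H(-1))$ is already nonzero for every plane $H$ not containing the zero locus of a section. Concretely, the split bundle $\fc=\oc(a)\oplus\oc(-a)$ with $a\geq 1$ has $h^1(\fc(-1))=0$ but $h^0(\fc_H(-1))=\binom{a+1}{2}\neq 0$ for every $H$, so one cannot hope to prove $h^1(\fc_H(-1))=0$ via that vanishing. Your fallback ``splitting off a line bundle summand'' is circular: that is precisely the conclusion you are trying to establish. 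The paper handles the non-stable case by a completely different argument, working directly with the curve $C=(s)_0$ in the exact sequence $0\to\oc\to\fc(r)\to\ic_C(2r+c_1)\to 0$ and comparing $h^0(\omega_C(-r+3))$ with $h^0(\omega_C(-r+2-c_1))$ via Serre duality on $C$.

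Your treatment of the extra hypotheses $h^1(\fc)=0$ and $h^1(\fc(1))=0$ in (2) is also only a sketch and does not go through as written: running the restriction sequence at twist $0$ or $1$ leaves one of the flanking terms uncontrolled (e.g.\ $h^2(\fc(-1))=h^1(\fc(-2))$ when $c_1=-1$), and the self-duality of $h^1(\fc_H(-1))$ on $H$ does not bridge the gap. The paper instead argues separately: for stable $\fc$ it uses Riemann-Roch (and in one boundary case the Hartshorne--Sols classification for $c_1=-1$, $c_2=2$) to get a contradiction, while for non-stable $\fc$ it shows $H^1(\fc(k-1))\hookrightarrow H^1(\fc(k))$ for $k\leq -r+1$ and thereby reduces to the already-proved vanishing at $k=-1$.
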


It is natural to ask if there is a similar result on $\Ptw$ and indeed there is: let $E$ be a normalized (i.e. $-1 \leq c_1(E) \leq 0$) rank two vector bundle on $\Ptw$, then \emph{$E$ splits if and only if $h^1(E(-1))=0$}. Furthermore this is the best possible result. Indeed if $E = \Omega (1)$, then $h^1(E(m))=0, \forall m \neq -1$, but $E$ is indecomposable. Actually this result follows from a more general fact: \emph{with notations as above, we have $h^1(E(k)) \leq h^1(E(-1)), \forall k \in \bZ$} (see Theorem \ref{T-CV2}). The proof of Theorem \ref{T-CV2} is quite easy using standard vector bundles techniques. This statement has certainly been (unconsciously) known since a long time but, as far as I know, hasn't been put in evidence. That's a pity because it has some interesting consequences. For example we show how to recover Theorem \ref{T-CV} from it. (For another application see \cite{Tju}.)

In the last section, after some general considerations, we describe rank two vector bundles on $\Ptw$ with $h^1(E(-1)) \leq 4$.

\section{Variations on a theorem of Chiantini and Valabrega.}

Let us take some notations and recall some basic facts. 

If $F$ is a rank two vector bundle on $\Pn$, $n \geq 2$, then $c_1(F(m)) = c_1(F) + 2m$ and $c_2(F(m)) = c_1(F)m + c_2(F) + m^2$. A rank two vector bundle $E$ is \emph{normalized} if $-1 \leq c_1(E) \leq 0$. In this case we will denote by $c_1, c_2$ its Chern classes. 

\emph{In the sequel $E$ will \emph{always} denote a normalized rank two vector bundle with Chern classes $c_1, c_2$.}

The integer $r_E$ (or just $r$ if no confusion can arise) is defined as follows $r = min\{k \in \bZ \mid h^0(E(k)) \neq 0\}$. In other words $r$ is the least twist of $E$ having a section. Let $s \in H^0(E(r))$. If $s$ does not vanish, then $E \simeq \oc (-r) \oplus \oc (r+c_1)$. If $s$ vanishes, by minimality, its zero locus $(s)_0 = Z$, has codimension two and we have an exact sequence: $0 \to \oc \to E(r) \to \ic _Z(2r+c_1) \to 0$. The subscheme $Z$ is l.c.i. and $\deg (Z) = c_2(E(r))$.

The bundle $E$ is said to be \emph{stable} if $r > 0$. If $r \leq 0$ we will say that $E$ is \emph{not stable} (it can be semi-stable if $c_1=0$).

If $E$ is not stable and indecomposable, then $h^0(E(r)) = 1$, hence $Z$ is uniquely defined.

Finally we recall Riemann-Roch theorem:
If $F$ is a rank two vector bundle on $\Ptw$ with Chern classes $c_1, c_2$, then $$\chi (F) = 2 + \frac{c_1(c_1+3)}{2} -c_2.$$ In particular if $E$ is a normalized rank two vector bundle on $\Ptw$ with Chern classes $c_i$, then:
\begin{equation}
\label{eq.RR2}
\chi (E(k)) = \frac{c_1}{2}(c_1+2k+3) + (k+1)(k+2) -c_2.
\end{equation}
If $\fc$ is a rank two normalized vector bundle on $\Pt$ with Chern classes $c_i$, then:
\begin{equation}
\label{eq.RR3}\begin{split}
If\,\,c_1=0:\,\, \chi (\fc (k)) = -c_2(k+2) + \frac{1}{3}(k+1)(k+2)(k+3)\\
If\,\,c_1=-1:\,\, \chi (\fc (k)) = \frac{1}{6}(k+1)(k+2)(2k+3) -\frac{c_2}{2}(2k+3)
\end{split}
\end{equation}

Now we can prove the main result of this section:

\begin{theorem}
\label{T-CV2}
Let $E$ be a rank two normalized vector bundle on $\Ptw$. Then:\\
(1) $h^1(E(k)) \leq h^1(E(-1)), \forall k \in \bZ$.\\
(2) $E$ splits if and only if $h^1(E(-1))=0$.
\end{theorem}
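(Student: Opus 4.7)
The plan is to first reduce (2) to (1) via Horrocks' splitting criterion on $\Ptw$: a vector bundle on $\Ptw$ splits iff $H^1_*(E)=0$, so (1) combined with $h^1(E(-1))=0$ gives $h^1(E(k))=0$ for every $k$ and hence the splitting of $E$.

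For (1), if $E$ splits both sides vanish; otherwise take a section $s \in H^0(E(r))$ at minimum twist $r=r_E$ with zero locus $Z$, and consider the twisted section sequence
\[
0 \to \oc(k-r) \to E(k) \to \ic_Z(k+r+c_1) \to 0.
\]
Since $H^1(\oc(k-r))=0$ always, there is an injection $H^1(E(k)) \hookrightarrow H^1(\ic_Z(k+r+c_1))$, which is an equality whenever $H^2(\oc(k-r))=0$, i.e., $k \geq r-2$. A key auxiliary lemma (multiplying $\ic_Z$ by a general linear form whose line avoids $Z$, and using $H^1(\oc_L(n))=0$ for $n \geq -1$) is that $n \mapsto h^1(\ic_Z(n))$ is non-increasing. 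Using these facts, if $r \leq 1$ the equality applies for all $k \geq -1$, so $h^1(E(k))$ is non-increasing on $[-1,\infty)$; combined with Serre duality $h^1(E(k))=h^1(E(-c_1-k-3))$, which makes the function symmetric about $-(c_1+3)/2$, the maximum is attained at $k=-1$ (and also at $k=-2$ when $c_1=0$, with the same value), giving (1) in this case.

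For the harder case $r \geq 2$, for $k$ in the ``middle range'' $-c_1-r-3 < k < r$ the minimality of $r$ yields $h^0(E(k))=0$ and Serre duality gives $h^2(E(k))=h^0(E(-c_1-k-3))^*=0$, so $h^1(E(k)) = -\chi(E(k))$. By \eqref{eq.RR2} this is a downward parabola in $k$ attaining its maximum $c_2$ at $k=-1$ (and at $k=-2$ for $c_1=0$); since $k=-1$ lies in the middle range, $h^1(E(k)) \leq h^1(E(-1))=c_2$ for all such $k$. For $k \geq r$, section-sequence monotonicity yields $h^1(E(k)) \leq h^1(E(r-2))$, and $r-2$ sits in the middle range, so this is bounded by $c_2$; the symmetric tail $k \leq -c_1-r-3$ reduces via Serre duality to the previous case.

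The main obstacle is precisely the sub-case $r \geq 2$: at $k=-1$ the direct section-sequence formula fails because $H^2(\oc(-1-r))$ is nontrivial, so one cannot simply invoke monotonicity of $h^1(\ic_Z(\cdot))$. The resolution is to use Riemann--Roch on the middle range (where $h^0$ and $h^2$ both vanish) and section-sequence monotonicity on the tails, bridging the two regions by Serre duality.
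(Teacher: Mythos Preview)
Your proof is correct, but it takes a genuinely different route from the paper in the stable case. Both arguments split into two cases, but the dichotomies differ: the paper distinguishes \emph{non-stable} ($r\le 0$) versus \emph{stable} ($r\ge 1$), whereas you distinguish $r\le 1$ versus $r\ge 2$.

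For $r\le 0$ the paper's argument is close to yours but slightly slicker: rather than invoking monotonicity of $n\mapsto h^1(\ic_Z(n))$ together with Serre duality on $E$, the paper simply observes that $h^1(E(-1))=h^1(\ic_Z(r-1+c_1))=\deg Z$ (because $r-1+c_1<0$) and then bounds every $h^1(E(k))\le h^1(\ic_Z(k+r+c_1))\le h^0(\oc_Z)=\deg Z$ in one stroke. Your monotonicity route is equally valid and in fact extends seamlessly to $r=1$, which is a small bonus.

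The substantive divergence is in the remaining stable range. The paper handles all stable bundles uniformly by restricting to a general line $L$ and invoking the Grauert--M\"ulich theorem to get $E_L\simeq \oc_L\oplus\oc_L(c_1)$; then $h^0(E_L(m))=0$ for $m\le -1$ forces $h^1(E(m-1))\le h^1(E(m))$ on $(-\infty,-1]$, and Serre duality does the rest. You instead avoid Grauert--M\"ulich entirely: on the ``middle range'' $-c_1-r-3<k<r$ you compute $h^1(E(k))=-\chi(E(k))$ directly by Riemann--Roch, and use the section sequence only on the tails. This trades a nontrivial external input (Grauert--M\"ulich) for a bit more bookkeeping, making your argument more self-contained; the paper's is shorter and more conceptual once Grauert--M\"ulich is granted.

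For (2), both you and the paper note the reduction to (1) via Horrocks' criterion; the paper additionally supplies a short direct argument that does not pass through Horrocks.
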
  

\begin{proof} (1) We may assume $E$ indecomposable. If $E$ is not stable we have an exact sequence: $0 \to \oc \to E(r) \to \ic _Z(2r+c_1) \to 0$, with $r \leq 0$ and $Z \subset \Ptw$ a non-empty zero-dimensional subscheme. Twisting by $\oc (-r-1)$ and taking cohomology we get: $h^1(E(-1)) = h^1(\ic _Z(r-1+c_1)$. Since $r-1+c_1 < 0$, $h^1(\ic _Z(r-1+c_1)) = h^0(\oc _Z) =: \deg (Z)$. Now for any $k$, the exact sequence above shows that $h^1(E(k)) \leq h^1(\ic _Z(k+r+c_1))$. Since $h^1(\ic _Z(m)) \leq h^0(\oc _Z), \forall m$ (consider $0 \to \ic _Z(m) \to \oc (m) \to \oc _Z \to 0$), we are done.

Now assume $E$ is stable. Let $L\subset \Ptw$ be a general line and consider the exact sequence $0 \to E(m-1) \to E(m) \to E_L(m) \to 0$. Since $E_L = \oc _L\oplus \oc _L(c_1)$ (Grauert-M\''ulich theorem, see \cite{OSS}), if $m \leq -1$, $h^0(E_L(m))=0$ and $h^1(E(m-1)) \leq h^1(E(m))$. It follows that $h^1(E(m)) \leq h^1(E(-1))$ if $m \leq -1$. If $m \geq 0$, by Serre duality $h^1(E(m)) = h^1(E^*(-m-3)) = h^1(E(-m-3-c_1))$ and again $h^1(E(m)) \leq h^1(E(-1))$.

(2) Of course (2) follows from (1) and Horrocks' theorem, but let us give a simpler argument. If $E$ is not stable arguing as in (1), we get $\deg Z =0$, hence $Z=\emptyset$ and $E$ splits. It remains to show that $h^1(E(-1)) > 0$ if $E$ is stable. By stability $\chi (E(-1)) = -h^1(E(-1))$. By Riemann-Roch, if $h^1(E(-1))=0$, we get $c_2=0$. Now $\chi (E) = 2$ if $c_1=0$ (resp. 1 if $c_1=-1$). It follows that $h^2(E) > 0$. But $h^2(E) = h^0(E^*(-3)) = h^0(E(-c_1-3))=0$, by stability. Hence $h^1(E(-1)) \neq 0$.
\end{proof}

\begin{remark} This is the best possible result in the sense that for any $m \neq -1$, there exists an indecomposable rank two vector bundle, $E$, with $h^1(E(m)) =0$: just take $E = \Omega (1)$.
\end{remark}

\begin{remark} Let's consider an unstable rank two vector bundle, $E$, with $c_1(E)=-1$. Arguing as above we see that $h^1(E)=0$ implies that $E$ splits. 

Assume now $h^1(E(1))=0$. We have $0 \to \oc \to E(r) \to \ic _Z(2r-1) \to 0$. Twisting by $\oc (-r+1)$ we get: $0 \to \oc (-r+1)\to E(1) \to \ic _Z(r) \to 0$ it follows that $h^1(\ic _Z(r))=0$. Now consider $0 \to \ic _Z(r) \to \oc (r) \to \oc _Z \to 0$. Since $r \leq 0$, the only possibility is $r=0$ and $\deg Z=1$. In conclusion, if $E$ doesn't split, we have: $0 \to \oc \to E \to \ic _P(-1) \to 0$, where $P$ is a point. Such bundles do exist.   
\end{remark}

\begin{remark}
\label{R-h1E>0 P2} One can show the following: let $E$ be a stable, rank two vector bundle on $\Ptw$, with $c_1(E)=-1$.

If $h^1(E)=0$ then there exists an exact sequence: $0 \to \oc \to E(1) \to \ic _Z(1) \to 0$, where $Z$ is a set of three non collinear points. We have $c_2(E) = 3$.

If $h^1(E(1))=0$ then there exists an exact sequence: $0 \to \oc \to E(2) \to \ic _Z(3) \to 0$, where $Z$ is a set of six points not lying on a conic. We have $c_2(E) = 4$.
\end{remark}

Let us recover Theorem \ref{T-CV}.

\begin{lemma} (1) Let $\fc$ be a stable, normalized, rank two vector bundle on $\Pt$. Then $h^1(\fc (-1)) \neq 0$.\\
(2) Moreover if $c_1(\fc )=-1$, we have $h^1(\fc ).h^1(\fc (1)) \neq 0$.
\end{lemma}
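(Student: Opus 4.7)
The plan is to handle both parts by a uniform strategy: compute $\chi$ by Riemann--Roch, kill $H^0$ and (through Serre duality) $H^3$ using stability, and then deduce $h^1 \geq -\chi$ is strictly positive.

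For part (1), Riemann--Roch (\ref{eq.RR3}) gives $\chi(\fc(-1)) = -c_2$ when $c_1 = 0$ and $\chi(\fc(-1)) = -c_2/2$ when $c_1 = -1$. Stability yields $h^0(\fc(-1)) = 0$, and Serre duality gives $h^3(\fc(-1)) = h^0(\fc(-c_1-3)) = 0$ since the twist is negative enough. Hence $h^1(\fc(-1)) \geq -\chi(\fc(-1))$, which is positive as soon as $c_2 > 0$; the latter is standard (for $c_1 = -1$ it follows immediately from Riemann--Roch applied to a general plane section, while for $c_1 = 0$ one rules out $c_2 = 0$ using that a stable bundle on $\Pt$ with trivial restriction to every plane would split, contradicting stability).

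For part (2), the subclaim $h^1(\fc) \neq 0$ follows by the same recipe: Riemann--Roch gives $\chi(\fc) = 1 - 3c_2/2$, Schwarzenberger's parity forces $c_2$ to be even, and combined with (1) we get $c_2 \geq 2$, so $\chi(\fc) \leq -2$; stability kills $h^0$ and $h^3$, giving $h^1(\fc) \geq 2$. For $h^1(\fc(1)) \neq 0$, Riemann--Roch gives $\chi(\fc(1)) = 5 - 5c_2/2$. When $c_2 \geq 4$ this is $\leq -5$ and the argument closes, yielding $h^1(\fc(1)) \geq 5$.

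The genuinely delicate case---the main obstacle---is $c_2 = 2$, where $\chi(\fc(1)) = 0$ and Riemann--Roch alone is silent. Here I would argue by contradiction: if $h^1(\fc(1)) = 0$, then $h^0(\fc(1)) = h^2(\fc(1)) = 0$, so the minimal twist $r$ of $\fc$ satisfies $r \geq 2$. The Serre correspondence applied to a section of $\fc(2)$ produces a locally complete intersection curve $Z \subset \Pt$ of degree $c_2(\fc(2)) = 4$ with $\omega_Z = \oc_Z(-1)$. A direct computation gives $\chi(\oc_Z) = 2$, so $p_a(Z) = -1$ and $Z$ must split into exactly two connected components. The subcanonical condition $\omega_Z = \oc_Z(-1)$ then forces each component, via adjunction, to be a smooth plane conic or a planar double line---in either case contained in a plane. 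Hence $Z \subset \Pi_1 \cup \Pi_2$ for two planes, and the quadric $\Pi_1 \Pi_2$ gives a nonzero section of $\ic_Z(2)$. Twisting the Serre sequence by $\oc(-1)$ yields $h^0(\fc(1)) = h^0(\ic_Z(2)) \geq 1$, contradicting $r \geq 2$ and completing the proof.
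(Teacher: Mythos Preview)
Your approach to part (1) differs from the paper's: the paper restricts to a general plane and invokes Theorem \ref{T-CV2} (this is the whole point of the section --- to derive Theorem \ref{T-CV} from Theorem \ref{T-CV2}), whereas you work entirely on $\Pt$ via Riemann--Roch and the inequality $h^1 \geq -\chi$. Your route is correct, though your justification of $c_2>0$ is sketchy (for $c_1=0$ you essentially need Barth plus Theorem \ref{T-CV2} anyway, so you have not really avoided the paper's ingredients). For the first half of (2), i.e.\ $h^1(\fc)\neq 0$, your argument coincides with the paper's.

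The substantive difference is the case $c_1=-1$, $c_2=2$ in (2). The paper simply cites the Hartshorne--Sols classification \cite{Ha-Sols}. You instead sketch a direct argument via the Serre construction, and here there are genuine gaps. First, you invoke ``a section of $\fc(2)$'' without checking $h^0(\fc(2))>0$; this is recoverable (e.g.\ from $h^1(\fc(1))=0$ one gets $H^0(\fc(2))\twoheadrightarrow H^0(\fc_H(2))$ for a general plane $H$, and the latter is nonzero by Riemann--Roch on $\Ptw$), but it should be said. Second, and more seriously, the implication ``$p_a(Z)=-1$ forces $Z$ to split into exactly two connected components'' is not justified: it presupposes that a \emph{connected} l.c.i.\ curve in $\Pt$ has $h^0(\oc_Z)=1$, which fails for arbitrary connected schemes and, in the non-reduced setting you must allow, requires an argument. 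Likewise, ``each component is a plane conic or planar double line'' needs the observation that no line can be an isolated component (since $\omega_L=\oc_L(-2)\neq\oc_L(-1)$) and a classification of degree-two l.c.i.\ structures with $\omega=\oc(-1)$, ruling out non-planar double lines. All of this is doable, but it amounts to redoing a chunk of the Hartshorne--Sols analysis; the paper's citation is the honest shortcut.
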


\begin{proof} (1) Let $H \subset \Pt$ be a general plane and consider the exact sequence $0 \to \fc (-2) \to \fc (-1) \to \fc _H(-1) \to 0$. Assume $h^1(\fc (-1)) = 0$. By Barth's restriction theorem (\cite{Ba}) $h^0(\fc _H(-1))=0$. It follows that $h^1(\fc (-2))=0$ and then $h^1(\fc (-m))=0, m \geq 1$. Now we have $h^2(\fc (-2)) = h^1(\fc (-c_1-2)) =0$. This implies $h^1(\fc _H(-1))=0$ and by Theorem \ref{T-CV2}, $\fc _H$ splits. This implies that $\fc$ also splits (see \cite{OSS}), a contradiction. Hence $h^1(\fc (-1)) \neq 0$.

(2) Assume $h^1(\fc )=0$. By stability we have $h^3(\fc ) = h^0(\fc (-3)) =0$. It follows that $\chi (\fc ) = h^2(\fc ) \geq 0$. By Riemann-Roch we get $1-3c_2/2 \geq 0$. This is impossible since $c_2 > 0$ and $c_2$ is even.

Assume $h^1(\fc (1))=0$. We have $h^3(\fc (1)) = h^0(\fc (-4))=0$. It follows that $\chi (\fc (1)) \geq 0$. By Riemann-Roch this yields: $5 -5c_2/2 \geq 0$. Since $c_2$ is even and $c_2>0$, it follows that $c_2 = 2$. Stable rank two vector bundles on $\Pt$ with $c_1=-1, c_2=2$ have been classified (\cite{Ha-Sols}) and they all have $h^1(\fc (1))=1$.
\end{proof}

\begin{lemma} Let $\fc$ be a non-stable, normalized, rank two vector bundle on $\Pt$. If $h^1(\fc (-1))=0$, then $\fc$ splits.
\end{lemma}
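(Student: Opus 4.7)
The plan is to argue by contradiction: assume $\fc$ does not split and derive a numerical impossibility. Non-stability gives an integer $r \leq 0$ with $H^0(\fc(r)) \neq 0$, and since $\fc$ does not split, the section of $\fc(r)$ vanishes on a non-empty locally complete intersection curve $C \subset \Pt$ of pure dimension one, yielding
\[
0 \to \oc \to \fc(r) \to \ic_C(2r+c_1) \to 0.
\]
Set $d = \deg C = c_2(\fc(r)) \geq 1$. By adjunction, $\omega_C \cong \oc_C(e)$ where $e := c_1 + 2r - 4 \leq -4$ (since $c_1 \leq 0$ and $r \leq 0$), so $C$ is Gorenstein and subcanonical.

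To use the hypothesis, I twist the above sequence by $\oc(-r-1)$ and note that $H^1(\oc(k)) = H^2(\oc(k)) = 0$ on $\Pt$ for every $k$; the long exact cohomology sequence then gives $H^1(\fc(-1)) \cong H^1(\ic_C(r+c_1-1))$, so $h^1(\ic_C(r+c_1-1)) = 0$. Since $r+c_1-1 < 0$, the restriction sequence $0 \to \ic_C(m) \to \oc(m) \to \oc_C(m) \to 0$ forces $h^0(\oc_C(r+c_1-1)) = 0$. Applying Serre duality on the Cohen-Macaulay curve $C$ with dualizing sheaf $\oc_C(e)$ identifies $h^0(\oc_C(r+c_1-1))$ with $h^1(\oc_C(r-3))$, so $h^1(\oc_C(r-3)) = 0$ as well. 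Moreover $h^0(\oc_C(r-3)) = 0$ because $r-3 < 0$ and $C$ has no embedded points (being l.c.i.). Hence $\chi(\oc_C(r-3)) = 0$.

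To conclude, I combine this with Riemann--Roch on $C$, $\chi(\oc_C(r-3)) = d(r-3) + \chi(\oc_C)$, which gives $\chi(\oc_C) = d(3-r)$. On the other hand, Serre duality applied to $\oc_C$ produces $\chi(\oc_C) = -de/2$ (from $\chi(\omega_C) = -\chi(\oc_C)$ combined with $\chi(\omega_C) = de + \chi(\oc_C)$). Equating and substituting $e = c_1 + 2r - 4$ simplifies to $c_1 = -2$, contradicting the normalization $c_1 \in \{-1, 0\}$. Thus $\fc$ must split. The main delicate point is the validity of Serre duality and Riemann--Roch on the possibly non-reduced, reducible, or disconnected curve $C$; both are legitimate because $C$ is locally complete intersection in $\Pt$, hence Cohen--Macaulay with dualizing sheaf $\omega_C = \oc_C(e)$ as computed by adjunction.
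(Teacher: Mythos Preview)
Your argument has one real flaw. You assert $h^0(\oc_C(r-3)) = 0$ ``because $r-3 < 0$ and $C$ has no embedded points''. That principle is false for non-reduced curves: take a double structure $C$ on a line $L\subset\Pt$ determined by a surjection $N_{L/\Pt}^* = 2\oc_L(-1) \twoheadrightarrow \oc_L(1)$ (such surjections exist, e.g.\ via two sections of $\oc_L(2)$ without common zeros). Then $0 \to \oc_L(1) \to \oc_C \to \oc_L \to 0$, and twisting by $-1$ gives $h^0(\oc_C(-1)) = h^0(\oc_L) = 1$. So ``$m<0$ and no embedded points'' does not force $h^0(\oc_C(m))=0$.

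The conclusion you need is nonetheless correct, with a one-line patch: you have already shown $h^0(\oc_C(r+c_1-1)) = 0$, and since $C$ is Cohen--Macaulay, multiplication by a general linear form yields injections $H^0(\oc_C(m)) \hookrightarrow H^0(\oc_C(m+1))$. As $c_1\geq -1$ gives $r-3 < r+c_1-1$, monotonicity forces $h^0(\oc_C(r-3)) = 0$. With this fix your Riemann--Roch computation on $C$ is valid and yields the contradiction $c_1 = -2$.

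Your route differs from the paper's. The paper does not compute $\chi(\oc_C(r-3))$; instead it restricts to a general plane $H$ and argues on $h^2(\ic_C(r-2+c_1))$. If this vanishes, one gets $h^1(\fc_H(-1))=0$, and the just-proved $\Ptw$ criterion (Theorem~\ref{T-CV2}) forces $\fc_H$, hence $\fc$, to split. If it is nonzero, Serre duality on $C$ turns it into $h^0(\oG_C(-r+2-c_1))\neq 0$, which (by the same monotonicity) contradicts $h^0(\oG_C(-r+3))=0$. So the paper deliberately routes the argument through the $\Ptw$ result to illustrate the section's theme, whereas your proof is self-contained on $\Pt$ and avoids the hyperplane restriction altogether.
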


\begin{proof} Since $\fc$ is not stable we have an exact sequence: $0 \to \oc \to \fc (r) \to \ic _C(2r+c_1) \to 0\,\,\,(*)$, where $r \leq 0$ and where $C$ is either empty or a l.c.i. curve with $\oG _C(4-2r-c_1) \simeq \oc _C\,\,\,(**)$. Assume $h^1(\fc (-1))=0$ and $C$ non empty. Twisting by $\oc (-r-1)$ and taking cohomology, we get $h^1(\ic _C(r-1+c_1))=0$. Since $r-1+c_1 < 0$ this implies $h^0(\oc _C(r-1+c_1))=0$. It follows from $(**)$ that $h^0(\oG _C(-r+3))=0$. Now consider the exact sequence: $0 \to \ic _C(r-2+c_1) \to \ic _C(r-1+c_1) \to \ic _{C\cap H}(r-1+c_1) \to 0$, where $H$ is a general plane. If $h^2(\ic _C(r-2+c_1))=0$, then $h^1(\ic _{C\cap H}(r-1+c_1))=0$. Restricting $(*)$ to $H$ and twisting by $-r-1$, we get $h^1(\fc _H(-1))=0$. By Theorem \ref{T-CV2}, $\fc _H$ splits, hence $\fc$ also splits, which contradicts the minimality of the twist $r$ ($C$ should be empty). So $h^2(\ic _C(r-2+c_1)) = h^1(\oc _C(r-2+c_1)) \neq 0$. By Serre duality on $C$: $h^1(\oc _C(r-2+c_1)) = h^0(\oG _C(-r+2-c_1))\neq 0$. But this contradicts $h^0(\oG _C(-r+3))=0$. We conclude that $C$ is empty and that $\fc$ splits.
\end{proof}

\begin{lemma} Let $\fc$ be a non stable rank two vector bundle on $\Pt$, with Chern classes $c_1 =-1, c_2$. If $h^1(\fc )=0$ or $h^1(\fc (1)) =0$, then $\fc$ splits.
\end{lemma}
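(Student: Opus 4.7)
I would run a case analysis parallel to the proofs of the previous two lemmas. Assume $\fc$ does not split (otherwise the conclusion is immediate) and consider the minimal-section sequence $(*):\,0\to\oc\to\fc(r)\to\ic_C(2r-1)\to 0$ with $r\leq 0$ and $C$ a non-empty l.c.i.\ curve satisfying $\oG_C\simeq\oc_C(2r-5)$. The plan is to derive a contradiction under each of the two hypotheses by translating them into vanishings of $h^0(\oc_C(\cdot))$.

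In the case $h^1(\fc)=0$, twisting $(*)$ by $\oc(-r)$ gives $h^1(\fc)=h^1(\ic_C(r-1))$, and since $r-1<0$, the sequence $0\to\ic_C(r-1)\to\oc(r-1)\to\oc_C(r-1)\to 0$ identifies this with $h^0(\oc_C(r-1))$, which must therefore vanish. A general hyperplane section of $C$ then yields $h^0(\oc_C(r-2))\leq h^0(\oc_C(r-1))=0$, and twisting $(*)$ by $\oc(-r-1)$ gives $h^1(\fc(-1))=h^0(\oc_C(r-2))=0$; the previous lemma then forces $\fc$ to split, a contradiction.

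In the case $h^1(\fc(1))=0$, twisting $(*)$ by $\oc(-r+1)$ shows $h^0(\oc_C(r))\leq h^0(\oc(r))$. If $r\leq -1$ the right-hand side is zero, and iterating the hyperplane-section argument above delivers $h^0(\oc_C(r-2))=0$, i.e.\ $h^1(\fc(-1))=0$, and the previous lemma again concludes. This leaves the subcase $r=0$ as the main obstacle.

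For $r=0$, the hypothesis forces $h^0(\oc_C)\leq 1$, so $C$ is connected (and non-empty), and $(*)$ reads $0\to\oc\to\fc\to\ic_C(-1)\to 0$ with $\deg C=c_2$ and $\oG_C\simeq\oc_C(-5)$. Twisting by $\oc(1)$, I compute $h^0(\fc(1))=4$, $h^2(\fc(1))=h^2(\ic_C)=h^1(\oc_C)=p_a(C)$, and $h^3(\fc(1))=h^0(\fc(-3))=0$ (by Serre duality and the minimality of $r$). Combined with the Riemann--Roch formula \eqref{eq.RR3}, which gives $\chi(\fc(1))=5-5c_2/2$, this yields $p_a(C)=1-5c_2/2$. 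Since $C$ is connected, $p_a(C)=h^1(\oc_C)\geq 0$, forcing $c_2\leq 2/5$ and therefore $c_2=0$, contradicting that $C$ is non-empty.
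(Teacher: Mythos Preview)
Your argument is correct (with one harmless slip: Serre duality gives $h^3(\fc(1))=h^0(\fc(-4))$, not $h^0(\fc(-3))$; both vanish since $r=0$). It does, however, take a different route from the paper.

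The paper avoids the case split on $r$ altogether. Rather than translating into $h^0(\oc_C(\cdot))$, it shows directly that the restriction map $H^0(\fc(k))\to H^0(\fc_H(k))$ is surjective for every $k\leq -r+1$: from $(*)$ and its restriction to a general plane $H$ one has $h^0(\fc(k))=h^0(\oc(k-r))=h^0(\fc_H(k))$ in that range, since $h^0(\ic_C(k+r-1))=h^0(\ic_{C\cap H}(k+r-1))=0$. This yields injections $H^1(\fc(k-1))\hookrightarrow H^1(\fc(k))$ for all $k\leq -r+1$; because $-r+1\geq 1$, either hypothesis $h^1(\fc)=0$ or $h^1(\fc(1))=0$ forces $h^1(\fc(-1))=0$ in one stroke, and the previous lemma (via $h^2(\fc(-2))=h^1(\fc(-1))$ and Theorem~\ref{T-CV2} on $\fc_H$) concludes.

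Your approach instead descends along the inequalities $h^0(\oc_C(m-1))\leq h^0(\oc_C(m))$ to reach the same vanishing. This works cleanly unless the starting degree is already non-negative, which is exactly why the subcase $r=0$ under $h^1(\fc(1))=0$ requires a separate Riemann--Roch/genus computation on the subcanonical curve $C$---an argument closer in spirit to the original Chiantini--Valabrega treatment. The paper's proof is more uniform and a little slicker; yours is more curve-geometric and makes the role of $C$ explicit.
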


\begin{proof} Since $\fc$ is not stable we have an exact sequence: $0 \to \oc \to \fc (r) \to \ic _C(2r-1) \to 0$, with $r \leq 0$. Twisting by $\oc (m)$ and taking cohomology, we see that $h^0(\fc (m+r)) = h^0(\oc (m))$ as long as $m \leq -2r+1$ (since then $h^0(\ic _C(m+2r-1))=0$). Twisting by $\oc _H$, $H$ a general plane, we get $0 \to \oc _H \to \fc _H(r) \to \ic _{C\cap H}(2r-1) \to 0$. Arguing as above we get that $h^0(\fc _H(m+r))=h^0(\oc _H(m))$ if $m \leq -2r+1$. We conclude that the exact sequence $0 \to \fc (k-1) \to \fc (k) \to \fc _H(k) \to 0$ is exact on $H^0$ if $k \leq -r+1$. In particular we have $0 \to H^1(\fc (k-1)) \to H^1(\fc (k))$, if $k \leq -r+1$. If $h^1(\fc (t_0))=0$ with $t_0 \leq -r+1$, then $h^1(\fc (m))=0$ for $m \leq t_0$. So if $h^1(\fc ).h^1(\fc (1))=0$, then $h^1(\fc (-1))=0$. Since $h^2(\fc (-2))=h^1(\fc (-1))$, we get $h^1(\fc _H(-1))=0$. By Theorem \ref{T-CV2} we conclude that $\fc _H$ splits, hence $\fc$ also splits.
\end{proof}

Putting every thing together we get:

\begin{proposition}
\label{P-CV2CV} Theorem \ref{T-CV2} implies Theorem \ref{T-CV}.
\end{proposition}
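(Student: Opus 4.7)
The plan is to observe that Proposition \ref{P-CV2CV} is essentially bookkeeping: it assembles the three preceding lemmas into the statement of Theorem \ref{T-CV}. First I would dispose of the easy direction. If $\fc = \oc(a)\oplus \oc(b)$ on $\Pt$, then $h^1(\fc(k))=0$ for every $k \in \bZ$, so all the vanishing conditions in (1) and (2) of Theorem \ref{T-CV} automatically hold. Everything therefore reduces to proving the converse.

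For part (1), I would assume $c_1(\fc)=0$ and $h^1(\fc(-1))=0$ and perform a dichotomy on stability of $\fc$. The first lemma above excludes the stable alternative (it asserts $h^1(\fc(-1))\neq 0$ in that case), and the second lemma closes the non-stable alternative by giving splitting directly. For part (2), I would similarly assume $c_1(\fc)=-1$ together with the vanishing of one of $h^1(\fc(-1))$, $h^1(\fc)$, $h^1(\fc(1))$. The stable possibility is again forbidden by the first lemma, which guarantees both $h^1(\fc(-1))\neq 0$ and $h^1(\fc)\cdot h^1(\fc(1))\neq 0$ whenever $\fc$ is stable with $c_1=-1$. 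The remaining non-stable case is then handled in its three subcases by the second lemma (vanishing at twist $-1$) or the third lemma (vanishing at twist $0$ or $1$).

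Since the three lemmas have already been established, there is no real obstacle at the level of this Proposition, and the argument is pure case analysis. The genuine difficulty has been pushed into the lemmas themselves, where the central tool is plane restriction: the cohomology of $\fc_H$ on a general plane $H\subset\Pt$ is controlled via Theorem \ref{T-CV2} on $\Ptw$, while Barth's restriction theorem together with Serre duality and Riemann--Roch fill in the missing inequalities on $\Pt$. This is precisely the mechanism by which Theorem \ref{T-CV2} \emph{implies} Theorem \ref{T-CV}, and writing out Proposition \ref{P-CV2CV} simply makes that implication explicit.
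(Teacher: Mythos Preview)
Your proposal is correct and matches the paper's approach exactly: the paper's own ``proof'' of Proposition~\ref{P-CV2CV} is literally the phrase ``Putting every thing together we get'', so the content is precisely the case analysis you describe, assembling the three preceding lemmas (stable excluded, non-stable at twist $-1$, non-stable with $c_1=-1$ at twists $0$ or $1$) into the statement of Theorem~\ref{T-CV}.
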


\begin{remark} The original proof in \cite{ChiantiV} has been worked out in the framework of subcanonical space curves.
\end{remark}

Let us conclude this section with a last remark:

\begin{proposition} Let $E$ be an indecomposable rank two vector bundle on $\Ptw$. Then the module $H^1_*(E)$ is \emph{connected} (i.e. if $h^1(E(t))\neq 0$ and $h^1(E(m))\neq 0$ with $t < m$, then $h^1(E(k)) \neq 0$ for $t < k <m$).\\
According to Theorem \ref{T-CV2} this is equivalent to the following: (a) if $h^1(E(-t))=0$ for some $t \geq 2$, then $h^1(E(-m))=0, \forall m \geq t$, and (b) if $h^1(E(t))=0$ for some $t \geq 0$, then $h^1(E(m))=0, \forall m \geq t$.
\end{proposition}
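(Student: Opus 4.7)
The plan is to prove (a) and deduce (b) from it by Serre duality. Since $E$ has rank two, $E^\ast \cong E(-c_1)$, and Serre duality on $\Ptw$ gives $h^1(E(m)) = h^1(E(-m-3-c_1))$. For $t\ge 0$ and $c_1\in\{-1,0\}$ the integer $s:=t+3+c_1$ is at least $2$, so the vanishing $h^1(E(t))=0$ becomes $h^1(E(-s))=0$ with $s\ge 2$; once (a) is established, it yields $h^1(E(-s'))=0$ for all $s'\ge s$, which via the same duality translates back to (b).

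To prove (a), I would follow the dichotomy used in the proof of Theorem \ref{T-CV2}. Suppose $h^1(E(-t))=0$ with $t\ge 2$. If $E$ is stable, restrict to a general line $L$: by Grauert--M\"ulich $E_L=\oc_L\oplus\oc_L(c_1)$, so $H^0(E_L(-t))=0$ for $t\ge 2$. The restriction sequence $0\to E(-t-1)\to E(-t)\to E_L(-t)\to 0$ then forces an injection $H^1(E(-t-1))\hookrightarrow H^1(E(-t))=0$, and iteration is immediate.

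The non-stable indecomposable case combines two uses of the Serre construction sequence $0\to\oc(-r)\to E\to\ic_Z(r+c_1)\to 0$, with $r\le 0$ and $Z$ a nonempty zero-dimensional subscheme. First, twisting by $\oc(k-r)$ and using $h^1(\oc(k-r))=0$, the equality $h^1(\ic_Z(n))=\deg Z$ for $n\le -1$, and $H^2(\oc(k-r))=0$ for $k\ge r-2$, one obtains $h^1(E(k))=\deg Z>0$ for every $k$ in the interval $[r-2,\,-r-c_1-1]$. Since $r,c_1\le 0$ this interval contains $\{-2,-1\}$, so the assumption $h^1(E(-t))=0$ with $t\ge 2$ forces $-t\le r-3$. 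Second, restricting the same sequence to a general line $L$ disjoint from $Z$ gives $0\to\oc_L(-r)\to E_L\to\oc_L(r+c_1)\to 0$, and this splits because $\mathrm{Ext}^1(\oc_L(r+c_1),\oc_L(-r))=H^1(\oc_L(-2r-c_1))=0$. Thus $E_L=\oc_L(-r)\oplus\oc_L(r+c_1)$ and $H^0(E_L(k))=0$ for $k\le r-1$, so the restriction sequence gives injections $H^1(E(k-1))\hookrightarrow H^1(E(k))$ for all $k\le r-1$. Starting from $-t\le r-3$ and iterating propagates the vanishing to all $-m$ with $m\ge t$.

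The main subtlety is the non-stable case: the line restriction only provides monotonicity for $k\le r-1$, not for $k\le -1$, so the interval computation is what certifies that the hypothesis $h^1(E(-t))=0$ already places $-t$ below the plateau $[r-2,\,-r-c_1-1]$ and within the range where the line-restriction monotonicity is available.
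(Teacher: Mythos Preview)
Your argument is correct. In the stable case it coincides with the paper's proof. In the non-stable case, however, you take a genuinely different route. The paper proves (b) first: from $0\to\oc\to E(r)\to\ic_Z(2r+c_1)\to 0$ one reads off $h^1(E(t))=h^1(\ic_Z(t+r+c_1))$ for $t\ge 0$, and the elementary fact that for a zero-dimensional $Z$ the function $k\mapsto h^1(\ic_Z(k))$ is nonincreasing (once it vanishes it stays zero) immediately gives (b); (a) is then deduced from (b) by Serre duality, at the cost of a small case check when $c_1=0,\ t=2$. You instead prove (a) directly via the plateau computation $h^1(E(k))=\deg Z$ on $[r-2,\,-r-c_1-1]$ together with the splitting $E_L\simeq\oc_L(-r)\oplus\oc_L(r+c_1)$ for a general line avoiding $Z$, and then get (b) by duality with no exceptional case. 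Your approach is a bit heavier in the non-stable case (you need to justify the splitting of $E_L$ and locate $-t$ relative to the plateau), but it has the virtue of being uniform: (a) is proved first in both the stable and non-stable situations, and (b) follows from a single duality step. The paper's approach is quicker in the non-stable case because it exploits directly the regularity behaviour of $\ic_Z$ rather than passing through a line restriction.
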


\begin{proof} (1) First assume $E$ stable. Using the exact sequence $0 \to E(-t-1) \to E(-t) \to E_L(-t) \to 0$ ($L \subset \Ptw$ a general line) and the fact that $h^0(E_L(-t))=0$ if $t \geq 2$ (because $E_L \simeq \oc _L\oplus \oc _L(c_1)$, by stability), condition (a) follows immediately.

Now (b) follows from (a) by duality, indeed $h^1(E(t)) = h^1(E(-t-c_1-3))$ and $t+3+c_1 \geq 2$.

(2) Assume $E$ non stable. Then we have an exact sequence $0 \to \oc \to E(r) \to \ic _Z(2r+c_1) \to 0$, with $r \leq 0$, $Z \subset \Ptw$ zero-dimensional. If $h^1(E(t))=0, t \geq 0$, then $h^1(\ic _Z(2r+c_1+t))=0$. Since $Z$ is zero-dimensional we have $h^1(\ic _Z(k))=0, \forall k \geq 2r+c_1+t$, hence $h^1(E(m))=0, \forall m \geq t$. This proves (b). Now (a) follows by duality: by assumption $0= h^1(E(-t)) = h^1(E(t-c_1-3))$. Since $t \geq 2$, $t-c_1-3 \geq 0$, except if $c_1=0, t=2$ but this case cannot occur since $h^1(E(-1)) = h^1(E(-2)) \neq 0$ by Theorem \ref{T-CV2}. So if $c_1=0$, we may assume $t \geq 3$.
\end{proof}

\begin{remark} (i) This improves Castelnuovo-Mumford's lemma at least for the vanishing part.\\
(ii) It can be shown that the $H^1$-module of an indecomposable rank two vector bundle on $\Pt$ is connected, but the proof is much more difficult, see \cite{Bu}.
\end{remark}

%+++++++++++++++++++++++++++++++++
\section{Rank two vector bundles on $\Ptw$ with $h^1(E(-1)) \leq 4$ .}

In this section we will investigate bundles with $h^1(E(-1))=:u$ small, say $u \leq 4$. Let us start with a useful remark:

\begin{remark}
\label{R-u} Assume $E$ indecomposable, $r$ as usual and consider $0 \to \oc \to E(r) \to \ic _Z(2r+c_1) \to 0$, where $Z \subset \Ptw$, is zero-dimensional. Let $0 \to \lc _1 \to \lc _0 \to \ic _Z \to 0$ be the minimal free resolution of $\ic _Z$. Then we can lift the morphism $\lc _0(2r+c_1) \to \ic _Z(2r+c_1)$ to a morphism $\lc _0(2r+c_1) \to E(r)$ and then get (after a twist) an exact sequence:
\begin{equation}
\label{eq:mfrE}
0 \to \lc _1(r+c_1) \to \oc (-r)\oplus \lc _0(r+c_1) \to E \to 0
\end{equation}
\label{eq:mfrEdual}
This gives the minimal free resolution of $H^0_*(E)$. Now by dualizing and taking into account that $E^* = E(-c_1)$ we get:
\begin{equation}
0 \to E \to \oc (r+c_1)\oplus \lc _0^*(-r) \to \lc _1^*(-r) \to 0
\end{equation}
Taking cohomology we get the beginning of the minimal free resolution of the $S:= k[x,y,z]$ module $H^1_*(E)$:
$$0 \to H^0_*(E) \to S(r+c_1)\oplus L_0^*(-r) \to L_1^*(-r) \to H^1_*(E) \to 0$$
Then combining with (\ref{eq:mfrE}) we get the whole resolution. By the way we notice a curious fact: $rk(S(r+c_1)\oplus L_0^*(-r)) = rk(L_1^*(-r))+2$. So for a finite length graded module $M$, to be the $H^1$-module of a rank two vector bundle on $\Ptw$, the number of relations among its generators must be the number of generators plus two. In fact this is not only necessary but also sufficient (see \cite{Rao} for details). 
\end{remark}

\begin{lemma}
\label{L-u}
Let $E$ be a normalized rank two vector bundle on $\Ptw$. Assume $E$ indecomposable, with $h^1(E(-1))=:u$. Let $r$ be the minimal twist of $E$ having a section. If $E$ is not stable, then $E(r)$ has a section vanishing on a zero-dimensional subscheme, $Z$, with $\deg (Z) =u$.
\end{lemma}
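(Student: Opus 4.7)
The proof essentially extracts the computation already carried out in the non-stable case of Theorem \ref{T-CV2}(1). The plan is to write down the exact sequence coming from a minimal section of $E(r)$, twist it so that $E$ appears as $E(-1)$, and read off $h^1(E(-1))$ from the cohomology of $\ic_Z$.

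First, since $E$ is indecomposable and not stable, we have $r\le 0$, and any section $s\in H^0(E(r))$ must vanish: if $s$ were nowhere zero it would give a trivial subbundle of $E(r)$ and force $E$ to split, contradicting indecomposability. Thus the zero locus $Z=(s)_0$ is a non-empty, locally complete intersection subscheme of $\Ptw$ of codimension $2$, i.e.\ zero-dimensional, and we obtain the Koszul-type sequence
$$0 \to \oc \to E(r) \to \ic_Z(2r+c_1) \to 0. \qquad (*)$$

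Next, I would twist $(*)$ by $\oc(-r-1)$ and pass to cohomology. On $\Ptw$ we have $h^1(\oc(k))=0$ for every $k$, and $h^2(\oc(-r-1))=h^0(\oc(r-2))=0$ since $r\le 0$ gives $r-2\le -2<0$. The long exact sequence therefore collapses to
$$h^1(E(-1)) \;=\; h^1(\ic_Z(r-1+c_1)).$$

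Finally, because $r\le 0$ and $c_1\in\{-1,0\}$, the integer $m:=r-1+c_1$ satisfies $m\le -1<0$. Using the structure sequence $0\to\ic_Z(m)\to\oc(m)\to\oc_Z\to 0$ together with $h^0(\oc(m))=h^1(\oc(m))=0$, we conclude
$$h^1(\ic_Z(m)) \;=\; h^0(\oc_Z) \;=\; \deg(Z).$$
Combining, $u=h^1(E(-1))=\deg(Z)$, which is exactly the claim.

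There is no real obstacle here: the whole argument is bookkeeping about the vanishing of $h^1$ and $h^2$ of line bundles on $\Ptw$ in the relevant ranges, and the only point requiring a line of justification is why the minimal section must vanish (handled by the indecomposability of $E$).
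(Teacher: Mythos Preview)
Your proof is correct and follows essentially the same route as the paper: twist the Koszul sequence of the minimal section by $\oc(-r-1)$, use $h^2(\oc(-r-1))=0$ to identify $h^1(E(-1))$ with $h^1(\ic_Z(r-1+c_1))$, and then conclude via the structure sequence since $r-1+c_1<0$. Your write-up is in fact a bit more careful, spelling out why the minimal section must vanish and why each cohomology group is zero.
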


\begin{proof} We have an exact sequence $0 \to \oc \to E(r) \to \ic _Z(2r+c_1) \to 0$, with $r \leq 0$ since $E$ is not stable. Twisting by $\oc (-r+1)$ and taking cohomology we get: $h^1(E(-1)) = h^1(\ic _Z(r+c-1 - 1) = h^0(\oc _Z )$, because $r+c_1-1 < 0$ (notice that $-r-1 \geq -1$, hence $h^2(\oc (-r-1))=0$). It follows that $\deg (Z) = u$.
\end{proof}

\begin{remark} (1) In view of this lemma and on Remark \ref{R-u} if we know all the possible minimal free resolutions of $u$ points we get all possible resolutions of $H^0_*(E)$. Observe that the minimal free resolution of $H^0_*(E)$ determines the whole cohomology of $E$. Indeed if we know $h^0(E(k)), \forall k \in \bZ$, then by duality we know $h^2(E(k)), \forall k \in \bZ$. Knowing $h^0(E(k)), h^2(E(k))$, we get $h^1(E(k))$ by Riemann-Roch.\\
(2) If $E$ is non stable, indecomposable, then $h^0(E(r)) = 1$, hence $Z =(s)_0$ is uniquely defined. So we can define a map, $\cG$, from the set of non stable bundles with $h^1(E(-1))=u$ to $Hilb^u(\Ptw )$, by $\cG (E) = Z$.
\end{remark}

\begin{lemma}
\label{L-c2=u sta} Let $E$ be a  stable, normalized, rank two vector bundle on $\Ptw$. We have $u := h^1(E(-1)) = c_2$.
\end{lemma}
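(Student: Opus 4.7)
The plan is a direct computation using Riemann--Roch plus vanishing of $h^0$ and $h^2$ via stability and Serre duality.

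First I would kill the two outer cohomology groups of $E(-1)$. Since $E$ is stable we have $r \geq 1$, so $h^0(E(-1)) = 0$ immediately. For $h^2(E(-1))$, I apply Serre duality on $\Ptw$: $h^2(E(-1)) = h^0(E^*(-2)) = h^0(E(-c_1 - 2))$, where I use $E^* \simeq E(-c_1)$. If $c_1 = 0$ this is $h^0(E(-2))$, and if $c_1 = -1$ this is $h^0(E(-1))$; in both cases the twist is strictly less than $r$, so the group vanishes.

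Next, with $h^0(E(-1)) = h^2(E(-1)) = 0$, the Euler characteristic reduces to $\chi(E(-1)) = -h^1(E(-1)) = -u$. Then I plug $k = -1$ into the Riemann--Roch formula (\ref{eq.RR2}):
\[
\chi(E(-1)) = \frac{c_1}{2}(c_1 + 1) + 0 - c_2.
\]
Both values $c_1 = 0$ and $c_1 = -1$ make the first summand vanish, leaving $\chi(E(-1)) = -c_2$. Comparing the two expressions for $\chi(E(-1))$ yields $u = c_2$.

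There is essentially no obstacle here; the only thing to be careful about is making sure the Serre duality step really produces a twist in the non-effective range for $E$, which is guaranteed by normalization ($-1 \leq c_1 \leq 0$) together with stability ($r \geq 1$).
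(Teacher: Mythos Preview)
Your proof is correct and follows exactly the same approach as the paper's: kill $h^0(E(-1))$ and $h^2(E(-1))$ by stability and Serre duality, then equate $-h^1(E(-1))$ with the Riemann--Roch value $\chi(E(-1)) = -c_2$. The paper simply compresses these steps into two sentences without spelling out the case-check on $c_1$.
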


\begin{proof} Since $h^0(E(-1)) = 0 = h^2(E(-1)) = h^0(E(-c-1-2))$, we have $\chi (E(-1)) = -h^1(E(-1))$. By Riemann-Roch $\chi (E(-1)) = -c_2$ and the result follows.
\end{proof}

\begin{remark}
\label{R-class} At this point the classification, or better the description, of rank two vector bundles $E$ with $h^1(E(-1))=u$ can be split into two parts:\\
 (1) for non stable bundles: it is enough to determine all the minimal free resolutions of l.c.i., zero-dimensional subschemes of degree $u$. 

\noindent (2) classification of stables vector bundles of Chern classes $-1 \leq c_1 \leq 0$ and $c_2 = u$. In particular we want to know the least twist having a section.

Observe that the set of non stable bundles with $h^1(E(-1))=u$ is some kind of counterpart to the moduli space $\mc (c_1, c_2)$ ($c_2=u$) in the stable case.
\end{remark}

Let us start with non stable bundles. To make things manageable we will assume $u \leq 4$.

\begin{lemma}
\label{L-res u<5}
Let $Z \subset \Ptw$ be a closed subscheme of codimension two, with $\deg (Z) =u \leq 5$. There are ten possible minimal free resolutions for the ideal of $Z$, namely:\\
(a) $Z$ is contained in a line, in this case $Z$ is a complete intersection $(1,u)$\\
(b1) $u = 3$ and $Z$ is not contained in a line, in this case:\\ $0 \to 2.\oc (-3) \to 3.\oc (-2) \to \ic _Z\to 0$.\\
(b2) $u=4$, $h^0(\ic _Z(1))=0$, but $Z$ has a subscheme of length three contained in a line. In this case:
$0 \to \oc (-3)\oplus \oc (-4) \to 2.\oc (-2)\oplus \oc (-3) \to \ic _Z \to 0$\\
(b3) $u=4$ and $Z$ is a complete intersection $(2,2)$.\\
(b4) $u = 5$, $h^0(\ic _Z(1))=0$ but $Z$ has a subscheme of length 4 contained in a line. In this case:
$0 \to \oc (-3)\oplus \oc (-5) \to 2.\oc (-2)\oplus \oc (-4) \to \ic _Z \to 0$\\
(b5) $u=5$, $h^0(\ic _Z(2))=1$. In this case:\\
$0 \to  2.\oc (-4) \to 2.\oc (-3)\oplus \oc (-2) \to \ic _Z \to 0$\\
\end{lemma}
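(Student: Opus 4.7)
The plan relies on the Hilbert--Burch theorem: for a zero-dimensional (hence arithmetically Cohen--Macaulay) subscheme $Z \subset \Ptw$, the minimal free resolution of $\ic _Z$ has length two,
$$0 \to \bigoplus_{i=1}^{n} \oc (-b_i) \to \bigoplus_{j=0}^{n} \oc (-a_j) \to \ic _Z \to 0,$$
and $\ic _Z$ is generated by the maximal minors of the syzygy matrix. The problem thus reduces to reading off the degrees $a_j,b_i$ from the Hilbert function of $Z$, so the proof will be a case analysis stratified by the initial degree $\alpha (Z) := \min\{k : h^0(\ic _Z(k)) \neq 0\}$.

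If $\alpha = 1$, any line through $Z$ is unique (two distinct lines meet in a single point, and $\deg Z \geq 2$ forces $Z$ onto one line only); then $Z$ is cut out on that line by a form of degree $u$, giving the complete intersection of type $(1,u)$ described in case (a).

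If $\alpha = 2$, I would use $0 \to \ic _Z(2) \to \oc (2) \to \oc _Z \to 0$ to compute $h^0(\ic _Z(2)) = 6 - u + h^1(\ic _Z(2))$, where the correction $h^1(\ic _Z(2))$ measures the failure of $Z$ to impose independent conditions on conics. Bezout pins this down: if four or more points of $Z$ lie on a line $L$, then every conic through $Z$ must contain $L$, whereas otherwise the $\leq 5$ points impose independent conditions. This distinguishes (b1) for $u=3$ (three non-collinear points); (b3) versus (b2) for $u=4$ (complete intersection of two conics, versus three collinear points and a fourth point off the line); and (b5) versus (b4) for $u=5$ (five points on a unique conic, smooth or reducible with $\leq 3$ on each component, versus four collinear points). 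The case $\alpha \geq 3$ is excluded by $h^0(\ic _Z(2)) \geq 6 - u \geq 1$.

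Within each subcase (b$i$), I would count generators in each degree by computing $h^0(\ic _Z(d))$ and subtracting the dimension of the image of the multiplication map from lower-degree pieces; Hilbert--Burch then dictates the syzygy degrees from the rank count (one more generator than syzygy). The main obstacle is the bookkeeping in (b2) and (b4), where the mixed degree pattern $(2,2,3)$ appears and one must verify that exactly one cubic generator is needed, not more: a direct comparison of $h^0(\ic _Z(3))$ with the dimension of the image of $H^0(\ic _Z(2)) \otimes H^0(\oc (1)) \to H^0(\ic _Z(3))$ settles this, and the remaining syzygy of degree 4 (respectively 5) is then forced by Hilbert--Burch.
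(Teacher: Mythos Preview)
Your approach via Hilbert--Burch and a case analysis on $\alpha(Z)$ is the standard one and is correct; the paper itself offers no argument beyond ``Well known'', so your outline in fact supplies what the paper omits.

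One slip to correct: in case (b4) the generator pattern is $(2,2,4)$, not $(2,2,3)$. With four points of $Z$ on a line $L$ and the fifth point $p$ off it, every cubic through $Z$ still contains $L$ (Bezout), so $H^0(\ic_Z(3))$ consists exactly of forms $L\cdot q$ with $q$ a conic through $p$; this $5$-dimensional space is already the image of $H^0(\ic_Z(2))\otimes H^0(\oc(1))$, and no new cubic generator is needed. The first degree in which the multiplication map fails to surject is $4$ (quartics need not contain $L$), which is where the extra generator appears, and Hilbert--Burch then forces the syzygy degrees $3$ and $5$ as stated. Your method---comparing $h^0(\ic_Z(d))$ with the image of multiplication from lower degrees---is exactly right; you just need to run it one degree further in this case.
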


\begin{proof} Well known.
\end{proof}

As explained before this gives us all the possible resolutions (hence all the possible cohomologies) of non stable, indecomposable bundles with $h^1(E(-1)) \leq 4$. We need $u=5$ for the stable case:

\begin{proposition}
\label{P-stable u<5}
Let $E$ be a stable, normalized, rank two vector bundle on $\Ptw$, with $h^1(E(-1)) = u \leq 4$. As usual let $r$ denote the minimal twist of $E$ having a section. Then $r=1$ or $r = 2, c_1 = -1, u=4$. Moreover:\\
(1) If $c_1=0$ we have $u \geq 2$ and $E(1)$ has a section vanishing on a subscheme of degree $u+1$ which is not contained in a line.\\
(2) If $c_1 = -1$ and $r=1$, we have $u \geq 1$ and $E(1)$ has a section vanishing on a subscheme of length $u$. If $r = 2, u=4$, then $E(2)$ has a section vanishing on a degree 6 subscheme, $Z$, with $h^0(\ic _Z(2))=0$.
\end{proposition}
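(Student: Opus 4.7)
My plan is to combine Riemann--Roch with Serre duality (using stability) to bound $r$, and then exploit the defining exact sequence of a section to pin down the geometry of $Z$. By Lemma~\ref{L-c2=u sta} I may substitute $c_2 = u$ throughout. Stability gives $h^0(E(k)) = 0$ for $k \leq 0$ (hence $r \geq 1$), and by Serre duality $h^2(E(k)) = h^0(E(-c_1-3-k)) = 0$ whenever $-c_1-3-k \leq 0$; in particular $h^2(E(1)) = h^2(E(2)) = 0$ in both cases $c_1 \in \{0,-1\}$.

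The first step is to compute $\chi(E(1))$ via (\ref{eq.RR2}): one obtains $6-u$ when $c_1=0$ and $4-u$ when $c_1=-1$. For $c_1=0$ and $u\leq 4$ this is $\geq 2$, so $h^0(E(1))>0$ and $r=1$; for $c_1=-1$ and $u\leq 3$ the same argument yields $r=1$. The remaining subcase $c_1=-1, u=4$ has $\chi(E(1))=0$, so $r=1$ is not forced. Here I would escalate to $\chi(E(2)) = 5 > 0$, which together with $h^2(E(2))=0$ gives $h^0(E(2))\geq 5$, so $r\leq 2$ and therefore $r\in\{1,2\}$. The lower bounds $u\geq 2$ (case $c_1=0$) and $u\geq 1$ (case $c_1=-1$) come from the same argument applied to $\chi(E)$ itself: smaller $u$ would force $h^0(E)>0$, contradicting stability.

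For the geometric statements I would write the defining sequence
\[
0 \to \oc \to E(r) \to \ic_Z(2r+c_1) \to 0,
\]
so that $\deg Z = c_1 r + c_2 + r^2$. Case by case this gives $\deg Z = u+1$ in (1), $\deg Z = u$ in (2) with $r=1$, and $\deg Z = 6$ in (2) with $r=2$. To rule out $Z$ on a line in case (1), argue by contradiction: $h^0(\ic_Z(1))\geq 1$ would follow, and twisting the defining sequence by $\oc(-1)$ yields $0 \to \oc(-1) \to E \to \ic_Z(1) \to 0$; since $H^1(\oc(-1))=0$, this gives $h^0(E)\geq 1$, contradicting stability. The same mechanism handles the condition $h^0(\ic_Z(2))=0$ in the $r=2$ case: any section of $\ic_Z(2)$ would, via the twist by $\oc(-1)$, produce a section of $E(1)$, contradicting $r=2$.

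Nothing here is really delicate; the one point that merits care is recognising that $r=2$ can only occur when $\chi(E(1))\leq 0$, and that under the hypothesis $u\leq 4$ this isolates exactly the subcase $c_1=-1, u=4$. All the geometric constraints on $Z$ then follow uniformly from the minimality of $r$ applied to the defining sequence.
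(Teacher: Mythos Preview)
Your argument is correct and follows essentially the same route as the paper: Riemann--Roch plus Serre duality (via stability) to control $\chi(E(1))$ and $\chi(E(2))$, then the section exact sequence and minimality of $r$ to read off $\deg Z$ and the vanishing of $h^0(\ic_Z(1))$ or $h^0(\ic_Z(2))$. The only cosmetic difference is that the paper obtains $u\geq 2$ for $c_1=0$ from the geometric fact that $Z$ not on a line forces $\deg Z\geq 3$, whereas you get it directly from $\chi(E)=2-u$; both are equivalent.
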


\begin{proof} In any case $h^2(E(1)) = h^0(E(-c_1-4))=0$ by stability. Since $\chi (E(1)) = 6-c_2$ if $c_1=0$ (resp. $4 - c_2$ if $c_1 = -1$) and since $c_2 = u$ (Lemma \ref{L-c2=u sta}), we get $\chi (E(1)) > 0$, except if $c_1=-1, u=4$. In this case we have $h^0(E(1)) = h^1(E(1))$. Assume $h^0(E(1))=0$. Since $\chi (E(2)) = 9-c_2$, we have $h^0(E(2)) > 0$ and an exact sequence $0 \to \oc \to E(2) \to \ic _Z(3) \to 0$, where $\deg (Z) = c_2(E(2)) = 6$. We have $h^0(E(1)) = 0 = h^0(\ic _Z(2))$. This proves the first claim.\\
(1) Assume $c_1=0$. We have $0 \to \oc \to E(1) \to \ic _Z(2) \to 0$. By stability $h^0(\ic _Z(1))=0$. In particular $deg Z = c_2+1 \geq 3$, i.e. $u = c_2 \geq 2$.\\
(2) Assume now $c_1 = -1$. Since $c_2(E(1)) = c_2 = u$, if $r=1$, we have $0 \to \oc \to E(1) \to \ic _Z(1) \to 0$, with $Z$ of degree $u$. If $u=4$ and $h^0(E(1))=0$, a section of $E(2)$ vanishes along $Z$ of degree 6 with $h^0(E(1)) = 0 = h^0(\ic _Z(2))$.
\end{proof}

\begin{remark} (1) By Serre's construction for any $k \leq 2$ and any locally complete intersection, zero-dimensional subscheme $Z\subset \Ptw$ there exists a rank two vector bundle, $F$, with $c_1(F) = k$, and an exact sequence $0 \to \oc \to F \to \ic _Z(k) \to 0$. If $k \leq 0$ this is the least twist of $F$ having a section and $F$ is not stable. In particular all the bundles we have considered in Proposition \ref{P-stable u<5} do really exist !\\
(2) We have the list of all possible resolutions for $H^0_*(E)$, where $E$ is a normalized bundle with $h^1(E(-1)) \leq 4$. Indeed the only case not covered by Lemma is when $r=2$, but a subscheme, $Z$, of degree 6, not on a conic has a resolution like: $0 \to 3.\oc (-4) \to 4.\oc (-3) \to \ic _Z \to 0$.

(2) We observe that if $u=1$ we always have that $E(r)$ has a section vanishing at one point. More precisely:
\end{remark}

\begin{corollary}
Let $E$ be a normalized, indecomposable, rank two vector bundle on $\Ptw$. Let $r$ denote the minimal twist of $E$ having a section.\\
(1) The following are equivalent:\\
(i) $h^1(E(m)) \leq 1, \forall m \in \bZ$\\
(ii) $h^1(E(-1)) = 1$\\
(iii) $E(r)$ has a section vanishing at one point\\
(iv) there is an exact sequence:
$$0 \to \oc (-b-1) \to \oc (-a) \oplus 2.\oc (-b) \to E \to 0$$
with $a \leq b$ (in particular $a=r, b=-r-c_1+1$).\\
(2) A bundle like in (1) is stable if and only if $a = b$, if and only if $E = \Omega (1)$.
\end{corollary}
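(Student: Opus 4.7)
The plan is to prove (1) by the cycle (i) $\Rightarrow$ (ii) $\Rightarrow$ (iii) $\Rightarrow$ (iv) $\Rightarrow$ (i), invoking the tools already established in the paper. The equivalence (i) $\Leftrightarrow$ (ii) is immediate from Theorem \ref{T-CV2}: the inequality $h^1(E(m)) \leq h^1(E(-1))$ gives (ii) $\Rightarrow$ (i), while indecomposability together with the splitting criterion in Theorem \ref{T-CV2}(2) forces $h^1(E(-1)) \geq 1$, so (i) $\Rightarrow$ (ii).

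For (ii) $\Rightarrow$ (iii) I would split into the stable and non-stable cases. If $E$ is non-stable, Lemma \ref{L-u} directly yields a section of $E(r)$ vanishing on a subscheme of degree $u = 1$, i.e.\ at a single point. If $E$ is stable, Lemma \ref{L-c2=u sta} gives $c_2 = 1$, and then Proposition \ref{P-stable u<5} applied with $u = 1$ excludes the cases $c_1 = 0$ (which would require $u \geq 2$) and $c_1 = -1, r = 2$ (which requires $u = 4$), leaving only $c_1 = -1, r = 1$ with $E(1)$ vanishing at one point; this identifies the bundle as a twist of $\Omega(1)$, to be reconfirmed in part (2). For (iii) $\Rightarrow$ (iv), I would plug the Koszul resolution $0 \to \oc(-2) \to 2\oc(-1) \to \ic_P \to 0$ of a point into the machinery of Remark \ref{R-u}, producing exactly the resolution in (iv) with $a = r$ and $b = -r - c_1 + 1$; the inequality $a \leq b$ follows from $c_1 \in \{-1,0\}$ together with the bounds on $r$ established earlier.

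To close the loop with (iv) $\Rightarrow$ (i) I would dualize the resolution, using $E^* \simeq E(-c_1)$ and $c_1 = 1 - a - b$, to get the short exact sequence
\[
0 \to E \to \oc(1-b) \oplus 2\oc(1-a) \to \oc(2-a) \to 0.
\]
Twisting by $\oc(m)$ and taking cohomology, the vanishing of $H^1$ of line bundles on $\Ptw$ gives $H^1(E(m))$ as the cokernel of a map $H^0(\oc(1-b+m)) \oplus 2H^0(\oc(1-a+m)) \to H^0(\oc(2-a+m))$ determined by multiplication by a form $f$ and two linear forms $g_1, g_2$. Indecomposability of $E$ forces $g_1, g_2$ to be linearly independent (otherwise an $\oc(-b)$ summand would split off), and then a Koszul-type computation shows this cokernel has dimension $\leq 1$ for every $m$, establishing (i).

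For part (2), note that $E$ is stable iff $r > 0$ iff $a > 0$. Since $a + b \in \{1, 2\}$ (from $c_1 \in \{-1,0\}$) and $a \leq b$, the condition $a > 0$ forces $a = b = 1$, and conversely. In that case the resolution becomes $0 \to \oc(-2) \to 3\oc(-1) \to E \to 0$, and the differential is given by three linear forms which must be linearly independent (by indecomposability); after a change of basis in $3\oc(-1)$ we may take them to be $x_0, x_1, x_2$, which is precisely the resolution of $\Omega(1)$ obtained by splicing the Koszul complex of $(x_0, x_1, x_2)$. The main potentially delicate step is the cokernel computation in (iv) $\Rightarrow$ (i), where one must carefully verify case-by-case that the multiplication map has rank exactly one short of surjective, but the Koszul trick with $(g_1, g_2)$ handles this uniformly.
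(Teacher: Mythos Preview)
Your argument is correct and follows the paper closely for (i)$\Leftrightarrow$(ii), (ii)$\Rightarrow$(iii), (iii)$\Rightarrow$(iv), and for part (2). The only genuine divergence is in (iv)$\Rightarrow$(i).

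The paper closes the cycle geometrically: from the resolution with $a\leq b$ it observes that $r=a$, so a section of $E(a)$ vanishes in codimension two; it then builds the obvious commutative diagram with the column $0\to\oc\to E(a)\to\ic_Z(a-b+1)\to 0$ to read off the resolution $0\to\oc(-2)\to 2\oc(-1)\to\ic_Z\to 0$, concludes $Z$ is a single point, and finishes via $h^1(\ic_p(m))\leq 1$. Your route is purely algebraic: you dualize, use $E^*\simeq E(-c_1)$, and compute $h^1(E(m))$ as the cokernel of the multiplication map by $(f,g_1,g_2)$, with the Koszul observation that two independent linear forms already leave a cokernel of dimension at most one in every degree. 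Both arguments are short; the paper's has the virtue of staying inside the ``section vanishing at a point'' picture that governs the whole corollary, while yours has the virtue of not needing to re-identify $r$ or $Z$ and working directly from the datum in (iv). Your justification that $g_1,g_2$ are independent (else an $\oc(-b)$ splits off, contradicting indecomposability) is the one point that deserves a sentence of care, but it is correct.
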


\begin{proof} (i) $\Leftrightarrow$ (ii), since $E$ is indecomposable, this follows from Theorem \ref{T-CV2}.\\
(ii) $\Rightarrow$ (iii): If $E$ is non stable this follows from Lemma \ref{L-u}. If $E$ is stable this follows from Proposition \ref{P-stable u<5}. More precisely we have $c_2 = 1$ (Lemma \ref{L-c2=u sta}) and $r=1$, $c_1 =-1$ (Proposition \ref{P-stable u<5}).\\
(iii) $\Rightarrow$ (iv): This follows from Remark \ref{R-u}.\\
(iv) $\Rightarrow$ (i): Since $a \leq b$, $r=a$, hence a section of $E(a)$ will vanish in codimension two. Since $c_1 = -r-b+1$, we get $b=-r-c_1+1$, $c_1(E(a)) = a-b+1$. We get a commutative diagram:
$$\begin{array}{ccccccccc}
 & & & &0 & &0 & & \\
 & & & &\downarrow & &\downarrow & & \\
 & & & &\oc &= &\oc & & \\
 & & & &\downarrow & &\downarrow & & \\
0 & \to & \oc (a-b-1) &\to & \oc \oplus 2.\oc (-b+a)&\to & E(a)& \to & 0\\
 & &|| & &\downarrow & &\downarrow & & \\
 0 & \to & \oc (a-b-1) &\to &  2.\oc (-b+a)&\to & \ic _Z(a-b+1)& \to & 0\\
  & & & &\downarrow & &\downarrow & & \\
 & & & &0 & &0 & & \\
\end{array}$$
So we get $0 \to \oc (-2) \to 2.\oc (-1) \to \ic _Z \to 0$ and we conclude that $Z$ is a point $p$. Since $h^1(\ic _p(m))$ is $0$ if $m\geq 0$ and $1$ if $m<0$, we conclude that $h^1(E(k)) \leq 1, \forall k \in \bZ$.\\
(2) We have already seen ((ii) $\Rightarrow$ (iii)) that $E$ is stable if and only if $c_2 = r =1$, $c_1 =-1$. Hence we have $0 \to \oc (-1) \to 3.\oc \to E(1) \to 0$. It follows that $E(1) = T(-1) = \Omega (2)$. On the other hand if $r = a = b = -r-c_1 +1$, then $c_1 = -1$ and $r = 1$, in particular $E$ is stable.  
\end{proof}

\begin{remark} This result is known in the context of logarithmic bundles, see \cite{DS}, \cite{Mar-Vallès}
\end{remark}

In the same vein we have:

\begin{corollary}
Let $E$ be a normalized, indecomposable, rank two vector bundle on $\Ptw$. Let $r$ denote the minimal twist of $E$ having a section. The following are equivalent:\\
(i) $h^1(E(-1)) = 2$\\
(ii) $E(r)$ has a section vanishing along a subscheme of degree two, or $E$ is stable with $c_1 = 0, c_2 = 2, r =1$ and $E(1)$ has a section vanishing along a subscheme of degree three not contained in a line.\\
(iii) there is an exact sequence:
$$0 \to \oc (-b-2) \to \oc (-b-1) \oplus \oc (-b) \oplus \oc (-a) \to E \to 0$$
with $a \leq b$ (in particular $a=r, b=-r-c_1+1$), or:\\
$0 \to 2.\oc (-2) \to 4.\oc (-1) \to E \to 0$.
\end{corollary}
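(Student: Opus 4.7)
The argument closely parallels the preceding corollary; I would prove the cycle (i) $\Rightarrow$ (ii) $\Rightarrow$ (iii) $\Rightarrow$ (i).

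For (i) $\Rightarrow$ (ii) I split into the stable and non-stable cases. If $E$ is non-stable, Lemma \ref{L-u} with $u = 2$ directly gives a section of $E(r)$ vanishing on a length-$2$ subscheme, placing us in the first alternative. If $E$ is stable, Lemma \ref{L-c2=u sta} gives $c_2 = 2$; since $u \neq 4$, Proposition \ref{P-stable u<5} forces $r = 1$. When $c_1 = -1$ the section of $E(1)$ vanishes on a length-$2$ scheme (still the first alternative), while $c_1 = 0$ produces a section of $E(1)$ vanishing on three non-collinear points (the second alternative).

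For (ii) $\Rightarrow$ (iii) I feed each alternative into Remark \ref{R-u}. Any length-$2$ scheme $Z$ lies on a line, so it is a complete intersection of type $(1,2)$ with resolution $0 \to \oc(-3) \to \oc(-1) \oplus \oc(-2) \to \ic_Z \to 0$; substituting into (\ref{eq:mfrE}) and using $b = -r-c_1+1$ yields the first resolution of (iii), with $a \leq b$ following from the case analysis above. For the second alternative, case (b1) of Lemma \ref{L-res u<5} supplies the resolution $0 \to 2\oc(-3) \to 3\oc(-2) \to \ic_Z \to 0$, and specializing (\ref{eq:mfrE}) with $r = 1$, $c_1 = 0$ collapses the three $\oc(-1)$ summands against the additional one and gives $0 \to 2\oc(-2) \to 4\oc(-1) \to E \to 0$.

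For (iii) $\Rightarrow$ (i) the second resolution is immediate: twisting by $-1$ and taking cohomology of $0 \to 2\oc(-3) \to 4\oc(-2) \to E(-1) \to 0$ gives $h^1(E(-1)) = h^2(2\oc(-3)) = 2$. For the first resolution I mimic the diagram chase from the preceding corollary: the condition $a \leq b$ makes $r = a$ and produces a section of $E(a)$ whose cokernel the snake lemma identifies as $\ic_Z(a-b+1)$, fitting into $0 \to \oc(a-b-2) \to \oc(a-b-1) \oplus \oc(a-b) \to \ic_Z(a-b+1) \to 0$; a twist brings this to $0 \to \oc(-3) \to \oc(-1) \oplus \oc(-2) \to \ic_Z \to 0$, identifying $Z$ as two points on a line with $\deg Z = 2$. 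Twisting $0 \to \oc \to E(a) \to \ic_Z(a-b+1) \to 0$ by $-a-1$ and using $h^1(\oc(-a-1)) = h^2(\oc(-a-1)) = 0$ (the latter because $a \leq 1$) reduces the computation to $h^1(E(-1)) = h^1(\ic_Z(-b)) = \deg Z = 2$ via $0 \to \ic_Z(-b) \to \oc(-b) \to \oc_Z \to 0$ with $b \geq 1$.

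The main obstacle I anticipate is the bookkeeping in the first-resolution half of (iii) $\Rightarrow$ (i): the snake-lemma chase combined with the arithmetic of $b = -r - c_1 + 1$ must be executed with care, both to confirm that $Z$ is a genuine length-$2$ l.c.i.\ rather than some degeneration and to track which line-bundle cohomologies vanish under the various twists.
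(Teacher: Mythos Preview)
Your proposal is correct and follows exactly the approach the paper intends: the paper's own proof reads in full ``It is similar to the previous one, so we omit it,'' and you have carried out precisely that parallel argument, invoking Lemma~\ref{L-u}, Lemma~\ref{L-c2=u sta}, Proposition~\ref{P-stable u<5}, Remark~\ref{R-u}, and the diagram chase from the preceding corollary in the expected places. The one point you might make more explicit is why $a\le 1$ and $b\ge 1$ hold in the first-resolution case of (iii)~$\Rightarrow$~(i): computing $c_1(E)=1-a-b$ from the resolution and using $-1\le c_1\le 0$ gives $1\le a+b\le 2$, which together with $a\le b$ forces both inequalities.
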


\begin{proof} It is similar to the previous one, so we omit it.
\end{proof}

%^^^^^^^^^^^^^^^^^^^^^^^^^^^^^^^^^^

%*****************************************
%*****************************************

%\printindex


\begin{thebibliography}{JlVBdles}


\bibitem{Ba} Barth, W.: \emph{Some properties of stable rank-2 vector bundles on $\Pn$}, Math. Ann., {\bf 226}, 125-150 (1977)

\bibitem{Bu} Buraggina, A.: \emph{The intermediate chomology modules of a rank two vector bundle on $\Pt$ are not split}, Comm. Alg., {\bf 25}, 999-1008 (1997)

\bibitem{ChiantiV} Chiantini, L.-Valabrega, P.: \emph{Subcanonical curves and complete intersections in projective 3-space}, Ann. Mat. Pura Appl., {\bf 138}, 309-330 (1984)

\bibitem{DS} Dimca, A.-Sticlaru, G.: \emph{Nearly free divisors and rational cuspidal curves}, arXiv:1505.0066v3 [math.AG] 2 Jun 2015 (2015)

\bibitem{Ein} Ein, L.: \emph{An analogue of Max Noether's theorem}, Duke Math. J., {\bf 52}, 689-706 (1985)

\bibitem{Tju} Ellia, Ph.: \emph{Quasi complete intersections and global Tjurina number of plane curves}, preprint arXiv (2019) 




\bibitem{Ha-Sols} Hartshorne, R.-Sols, I.: \emph{Stable rank two vector bundles on $\Pt$ with $c_1 = -1; c_2 = 2$}, J. reine u. angewandte Math., {\bf 325}, 145-152 (1981)

\bibitem{Mar-Vallès} Marchesi, S.-Vallès, J.: \emph{Nearly free curves and arrangements: a vector bundle point of view}, arXiv:1712.0486v2 [math.AG] 11 Oct 2018 (2018)


\bibitem{OSS} Okonek, Ch.-Schneider, M.-Spindler H.: \emph{Vector bundles on complex projective spaces}, Progress in Mathematics {\bf 3}, Birkh\"auser (1980)

\bibitem{Rao} Rao, P.: \emph{A note on cohomology modules of rank two vector bundles}, Journal of Algebra, {\bf 86}, 23-34 (1984) 








\end{thebibliography}
\end{document}